\newcommand{\BC}{{\mathbb {C}}}
 \newcommand{\BN}{{\mathbb {N}}}
 \newcommand{\BR}{{\mathbb {R}}}
\newcommand{\CC}{{\mathcal {C}}}
\newcommand{\CK}{{\mathcal {K}}}
\newcommand{\RU}{{\mathrm {U}}}
\newcommand{\BBC}{{\mathrm{BC}}}
 \newcommand{\EP}{{\mathrm{EP}}}
 \newcommand{\GL}{{\mathrm{GL}}}
\newcommand{\GSp}{{\mathrm{GSp}}}
\newcommand{\Hom}{{\mathrm{Hom}}}
\newcommand{\PGL}{{\mathrm{PGL}}}  
\newcommand{\pr}{{\mathrm{pr}}}
\newcommand{\Rep}{{\mathrm{Rep}}}
\newcommand{\Res}{{\mathrm{Res}}}
\newcommand{\SL}{{\mathrm{SL}}}
\newcommand{\SO}{{\mathrm{SO}}}
\newcommand{\St}{{\mathrm{St}}}
\newcommand{\RTr}{{\mathrm{Tr}}}
\newcommand{\Ext}{{\mathrm{Ext}}}
\newcommand{\wh}[1]{{\widehat {#1}}}
\newcommand{\sk}{\medskip}
\newcommand{\lra}{\longrightarrow}
\newcommand{\bs}{\backslash}
\newcommand{\s}{\sk\noindent}
\newcommand{\res}{\mathrm{res}}
\newcommand{\dfn}[1]{\textit{#1}}
\def\varW@#1#2{%
\vtop{\m@th\ialign{##\cr
\hfil$#1 \mathrm{colim} $\hfil\cr
\noalign{\nointerlineskip\kern1.5\ex@}#2\cr
\noalign{\nointerlineskip\kern-\ex@}\cr}}
}
\def\colim{%
\mathop{\mathpalette\varW@{}}\nmlimits@
}\makeatother
\theoremstyle{plain}
\newtheorem{thm}{Theorem}[section] \newtheorem{cor}[thm]{Corollary}
\newtheorem{lem}[thm]{Lemma}  \newtheorem{prop}[thm]{Proposition}
\theoremstyle{remark} \newtheorem{remark}[thm]{Remark}
\theoremstyle{definition} 
\theoremstyle{definition} \newtheorem{example}[thm]{Example} 
\newtheorem{defn}[thm]{Definition}\newtheorem{defn+lem}[thm]{Definition and Lemma}
\numberwithin{equation}{section}
\newcommand*{\sheafhom}{\mathrm{H}\kern -.5pt om}
\begin{document}
\title{Top degree Ext-groups and relatively supercuspidal spectra}
\date{}

\author{Li Cai}
\address{Academy for Multidisciplinary Studies\\
Beijing National Center for Applied Mathematics\\
Capital Normal University\\
Beijing, 10048, People's Republic of China}
\email{caili@cnu.edu.cn}

\author{Yangyu Fan} 
\address{Key Laboratory of Algebraic Lie Theory and Analysis of Ministry of Education,\\ School of Mathematics and Statistics, Beijing Institute of Technology,\\ Beijing, 100081, People's Republic of China
} 
\email{yangyu.fan@bit.edu.cn}

\maketitle

\begin{abstract}
Relatively supercuspidal representations are analogue of  supercuspidal representations in the relative Langlands program. 
This work studies relatively supercuspidal representations using top degree Ext-groups via the Schneider-Stuhler
duality. As examples, the relatively supercuspidal spectra  for the Flicker-Rallis case and the diagonal case are determined. 
\end{abstract}

\tableofcontents

\section{Introduction}\label{intro}
Let $F$ be a $p$-adic field. Let $G$ be a reductive group over $F$ and $\Rep(G(F))$ be the category of complex smooth $G(F)$-representations. 
It is an important theorem of
Berstein-Zelevinsky \cite{BZ} that for any irreducible $\pi\in\Rep(G(F))$, there exists a parabolic subgroup $P=MN\subset G$ with Levi factor $M$ and  $\sigma\in\Rep(M(F))$ supercuspidal such that $\pi\hookrightarrow I_{P(F)}^{G(F)}\sigma$ (normalized induction). Moreover, the pair $(M,\sigma)$ is unique up to $G(F)$-conjugacy, called
the \dfn{cuspidal support} of $\pi$. 

It is natural to ask how to classify irreducible smooth representations in terms of  cuspidal supports. 
At least for discrete series representations, the  complete theory is available for both
$\GL_n$ (see \cite{Zel}) and classical groups (see \cite{MT} and \cite{X}). 

In the {\em relative Langlands program}, one considers  irreducible  $\pi\in\Rep(G(F))$ distinguished
by a subgroup $H\subset G$, i.e., $\Hom_{H(F)}(\pi,\BC) \not= 0$. The objects analogous to supercuspidal representations are the 
{\em relatively supercuspidal} (RSC) representations, i.e., distinguished irreducible  $\pi\in\Rep(G(F))$
such that  for any  $\ell\neq0 \in \Hom_H(\pi,\BC)$, the relative matrix
coefficients $\ell(\pi(\cdot)v)$, $v \in \pi$ are compactly supported modulo $Z_GH(F)$, where $Z_G$ is the
center of $G$.  Consider the case $(G,H)$ is a symmetric pair, say $H$ is the fixed point of
an involution $\theta$ on $G$. Kato-Takano  \cite[Theorem 7.1]{KT08} 
proves an analogue of the above theorem of 
Bernstein-Zelevinsky: for any distinguished  $G(F)$-representation $\pi$ on $G(F)$,  there exist a parabolic subgroup $P$ of $G$ 
with $M = P \cap \theta(P)$ a Levi subgroup of $P$ and a $H \cap M$-RSC $\sigma\in\Rep(M(F))$, 
such that  $\pi \hookrightarrow I_{P(F)}^{G(F)}\sigma$. 
This result gives a way to reduce the local harmonic analysis on
$p$-adic symmetric spaces to RSC representations (See, e.g. \cite{Off11}).  

It is a fundamental problem to determine RSC representations. 

 
 For symmetric pairs,  Kato-Takano  \cite[Theorem 6.9]{KT08} gives a characterization of relative supercuspidality in terms of Jacquet modules. In \cite{M, Smith, KT20}, many RSC representions are constructed by considering parabolic inductions of specified supercuspidal representations.  One key  tool in all these works is computing the Jacquet modules.

In this paper, we will detect RSC representations via top-degree Ext-groups instead of Jacquet modules. It works even for the pairs $(G,H)$ which
are not symmetric (See the diagonal case below). 

Our approach is motivated by the study of Prasad \cite{Pra18} on Ext-analogues of
branching law  and relates directly to the problem to determine irreducible (non-supercuspidal) 
$H(F)$-subrepresentations  of irreducible $\pi\in\Rep(G(F))$ (See \cite[Proposition 9.1]{NP20} for 
an inspiring example and also the work of Speh-Venkataraman \cite{SV1,SV2} when $F$ is Archimedean).

There are two natural questions for RSC representations.

\s{\bf Q1:} Classify RSC representations in terms of  cuspidal supports. 

\vspace{12 pt}

The second question is to understand RSC representations in the general framework of Sakellaridis-Venkatesh 
on the Plancherel formula for spherical varieties \cite{SV}. 
Assume the geometric quotient $X = H \bs G$ is spherical with respect to the right $G$-multiplication.
Under certain conditions, there exists  a reductive group $G_X$ over $F$ and a functorial morphism  
\[\iota_*: \wh{G_X} \lra \wh{G}\]
Here,   $\wh{G_X}$ and $\wh{G}$
are the unitary duals of $G_X$ and $G$ respectively.

Roughly speaking, it is  conjectured that there is an isomorphism of unitary representations
\[L^2(X(F)) \stackrel{\sim}{\lra} \int_{\wh{G_X}} \iota_*(\sigma)^{\oplus m(\sigma)}  d \sigma\]
where $d\sigma$ is the Plancherel measure of $G_X$ and $m(\sigma)$ is certain multiplicity.


\s{\bf Q2}: Classify RSC representations in term of  $G_X(F)$-representations  via $\iota_*$. 

\vspace{12 pt}

In fact, we will see from the cases below that
\begin{itemize}
	\item (Q1) implies (Q2) assuming
a good understanding of the morphism $\iota_*$. 
	\item a good understanding of the conjectured Plancherel decomposition (at least for the discrete
		spectrum) is a key ingredient for (Q1). 
\end{itemize}

The starting point of our method is the following key observation based on the 
Schneider-Stuhler duality theorem given by Nori-Prasad (See
\cite{NP20} and also Theorem \ref{SSNP} below).
\begin{lem}[See Proposition \ref{tprsc} for the general case] \label{lem1.1}
   Assume  $Z_G \cap H = Z_H$. For any irreducible $\pi \in \Rep(Z_H(F)\bs G(F))$ with $\dim\Hom_{H(F)}(\pi,\BC)<\infty$,
   \[\dim \Hom_{H(F)}(\pi,\BC) \geq \dim \Ext_{Z_H(F)\bs H(F)}^{d^\prime(\pi)}(D(\pi)^\vee,\BC).\]
   Moreover, $\pi$ is RSC if and only if
   \[0\neq\dim \Hom_{H(F)}(\pi,\BC) = \dim \Ext_{Z_H(F)\bs H(F)}^{d^\prime(\pi)}(D(\pi)^\vee,\BC).\]
   Here, 
   \begin{itemize}
   \item $\Ext_{Z_H(F)\bs H(F)}^{d^\prime(\pi)}(D(\pi)^\vee,\BC):= \Ext_{\Rep(Z_H(F)\bs H(F))}^{d^\prime(\pi)}(D(\pi)^\vee,\BC)$ is the degree $d^\prime(\pi)$ Ext-group in $\Rep(Z_H(F)\bs H(F))$.
   \item If denoted by $\omega$ the central character of $\pi$ and $\Rep(G,\omega)$ the category of 
	   smooth $G(F)$-representations with central character $\omega$,
    then $d^\prime(\pi)$ is the top Ext-degree  of $\pi$  in $\Rep(G,\omega)$, i.e. 
    for any $\pi' \in \Rep(G(F),\omega)$ and any $i > d^\prime(\pi)$,
   $\Ext^i_{\Rep(G(F),\omega)}(\pi,\pi') = 0$. In fact, $d^\prime(\pi)$ is the split rank of $Z_M\cap[G,G]$ where 
    $M$ is the Levi subgroup carrying the cuspidal support of $\pi$.
       \item $D(\pi)$ is the Aubert-Zelevinsky involution of $\pi$ with
    $D(\pi)^\vee$ its smooth dual. 
   \end{itemize}
\end{lem}
\begin{proof}
Extend  $\omega$ to $Z_GH(F)$ by  composing with the trivial character on $H(F)$. Denote by $I_{Z_GH(F)}^{G(F)}\omega$ 
(resp. $i_{Z_GH(F)}^{G(F)} \omega$) the space
of smooth functions $f:\ G(F)\to\BC$ (resp. with compact support modulo $ZH(F)$) such that 
\[ f(zg) = \omega(z)f(g) \quad \forall z\in Z_GH(F),\ g\in G(F)\]
on which  $G(F)$ acts by right translation.  Then
\[\Hom_{H(F)}(\pi,\BC) = \Hom_{Z_GH(F)}(\pi,\omega)=\Hom_{G(F)}\left( \pi, I_{Z_GH(F)}^{G(F)}\omega \right) \supset \Hom_{G(F)}\left( \pi, i_{Z_GH(F)}^{G(F)}\omega \right).\]
And $\pi$ is $H$-RSC if and only if   
\[0\neq\Hom_{G(F)}\left( \pi, I_{Z_GH(F)}^{G(F)}\omega \right)  = \Hom_{G(F)}
\left( \pi, i_{Z_GH(F)}^{G(F)}\omega \right).\]

By the  Schneider-Stuhler duality for $\pi$,
\[\dim\Hom_{G(F)}\left( \pi, i_{Z_GH(F)}^{G(F)}\omega  \right) = \dim
\Ext_{\Rep(G(F),\omega)}^{d^\prime(\pi)}\left( i_{Z_GH(F)}^{G(F)}\omega, D(\pi)\right)\]
By duality and the Frobenius reciprocity law,
$$\Ext_{\Rep(G(F),\omega)}^{d^\prime(\pi)}\left( i_{Z_GH(F)}^{G(F)}\omega, D(\pi)\right)=\Ext_{\Rep(G(F),\omega^{-1})}^{d^\prime(\pi)}
\left( D(\pi)^\vee, I_{Z_GH(F)}^{G(F)} \omega^{-1}\right)=\Ext_{Z_H(F)\bs H(F)}^{d^\prime(\pi)}\left( D(\pi)^\vee, \BC\right)$$
\end{proof}

We shall study (Q1) and (Q2) using Lemma \ref{lem1.1} for the following two cases
\begin{itemize}
   \item the Flicker-Rallis case: $E/F$ is a quadratic field extension and 
	    \[(G,H) = \left(\Res_{E/F}\GL_n, \GL_n\right).\]
	 See Theorem \ref{topd} and \ref{topd-B}.
	    \item the diagonal case:  $G_2$ is a subgroup of $G_1$ and
	   \[(G,H) = \left( G_1 \times G_2, G_2 \right)\]
	   where  $G_2$ embedded into $G_1 \times G_2$ diagonally. See
	   Theorem \ref{diagonal} and \ref{GGP}.
\end{itemize} 

The Flicker-Rallis pair is symmetric and Gelfand, i.e. $\dim\Hom_{H(F)}(\pi,\BC)\leq 1$ 
for any irreducible $\pi\in\Rep(G(F))$.  In general, the diagonal case is not symmetric so that
the characterization of RSC representations in terms of Jacquet modules given by Kato-Takano
cannot be applied directly.

\subsection{The Flicker-Rallis case} 

Let $E/F$ be a quadratic field extension. Let $G = \Res_{E/F} \GL_n$ and $H = \GL_n$.


The following is our main result for the Flicker-Rallis case, which answers (Q1). 
\begin{thm}\label{topd}  Let $\pi\in \Rep(G(F))$  be an
 irreducible  representation. Then $\pi$   is RSC if and only if 
$\pi = I_{P(F)}^{G(F)}\sigma$ 
for some parabolic subgroup $P=MN \subset G$ and some  
$H \cap M$-distinguished regular supercuspidal $M(F)$-representation $\sigma$.  
\end{thm}
Here  an irreducible representation
$\sigma = \boxtimes_i \sigma_i\in\Rep(M(F))$
is called {\em regular } if $\sigma_i$ are mutually distinct.

Immediate from Theorem \ref{topd}, we have the following
result determining  irreducible $\pi\in\Rep(G(F))$ admitting the Steinberg representation $\St = D(\BC) \in \Rep(H(F))$ as a subrepresentation. Let $T\subset \Res_{E/F}\GL_n$ be the diagonal torus.
\begin{cor}\label{Stein-sub} Denote by $\St$ the Steinberg representation of $H(F)$.  For any irreducible $\pi\in G(F)$, $\dim\Hom_{H(F)} (\St,\pi) \leq 1$ with the equality holds iff $\pi = I_{B(F)}^{G(F)} \chi$ with $\chi$ regular and $T\cap H$-distinguished. 
\end{cor}
\begin{proof} 
The top degree $d^\prime(\St)=d^\prime(\BC)=n-1$. By the Schneider-Stuhler duality for $\BC\in\Rep(H(F))$, for  any irreducible  $\pi\in\Rep(G(F))$
$$\dim \Hom_{H(F)}\left( \St, \pi \right) = \dim \Ext_{F^\times\bs H(F)}^{n-1}\left( \pi, \BC\right).$$
(Readers may compare with the use of Schneider-Stuhler duality in the proof of Lemma \ref{lem1.1}.)

Note that
$$\Ext^{n-1}_{F^\times\bs H(F)}(\pi,\BC)= \Ext^{n-1}_{F^\times\bs G(F)}(\pi,I_{H(F)}^{G(F)}\BC)=\Ext^{n-1}_{F^\times\bs G(F)}(i_{H(F)}^{G(F)}\BC, \pi^\vee)$$
and the top degree $d^\prime(\pi) \leq n-1$. If $d^\prime(\pi) < n-1$, $\Hom_{H(F)}\left( \St, \pi \right)$. When $d^\prime(\pi)=n-1$,  by Theorem \ref{topd} and applying the
Schneider-Stuhler duality for $\pi^\vee$, 
$$\dim\Hom_{H(F)}\left(\St, \pi \right)=\dim\Hom_{G(F)}(D(\pi)^\vee,i_{H(F)}^{G(F)}\BC)\leq1$$
with the equality holds if and only if $\pi = I_{B(F)}^{G(F)} \chi$ with $\chi$ regular and $T \cap H$-distinguished.
\end{proof}
\begin{remark} 
\begin{itemize}
\item It seems also possible to prove Corollary \ref{Stein-sub} by computing Jacquet modules. In case $n=2$, we supply all details in Proposition \ref{prasad}.

\item By considering the higher Ext-groups for the triple product pair, when $n=2$, 
we strengthen  Corollary \ref{Stein-sub} by replacing $\St$ to arbitrary irreducible representations in \cite{CF22}. 
\end{itemize}
\end{remark}

The proof of Theorem \ref{topd} contains three parts:
\begin{enumerate}
	\item The ``if'' part: $\pi = I_{P(F)}^{G(F)}\sigma$ with $\sigma\in \Rep(M(F))$ regular supercuspidal 
		and $H\cap M(F)$-distinguished is RSC.
	\item If $\pi = I_{P(F)}^{G(F)}\sigma$ is  RSC with $\sigma$ square-integrable, then $\sigma$ must be 
	distinguished supercuspidal.  In particular, RSC square-integrable representations are exactly 
		distinguished supercuspidal representations.
	\item   An irreducible $\pi\in\Rep(G(F),\omega)$ is a relatively discrete series (RDS) representation, 
		i.e. $\pi$ is a subrepresentation of   the $L^2$-completion $L^2(Z_GH(F) \bs G(F),\omega)$ of $i_{Z_GH(F)}^{G(F)}\omega$ if and only if	
		$\pi = I_{P(F)}^{G(F)}\sigma$ for  $H \cap M$-distinguished 
		regular square-integrable  $\sigma\in\Rep(M(F))$. 
\end{enumerate}
For the ``only if'' part of Theorem \ref{topd},  note that if $\pi$ is RSC, then  $\pi$ is RDS.
By (3), $\pi = I_{P(F)}^{G(F)}\sigma$ with $\sigma$ distinguished regular and square-integrable. 
By (2),  $\sigma$ must be supercuspidal.

Part (3) follows from the work of Mok \cite{Mok} on the local base change lifting from
unitary groups to general linear groups and the work of Beuzart-Plessis \cite{BP18P} on the
conjectured Plancherel formula for $L^2(H(F) \bs G(F))$. 
We briefly  review  these two results  here using notations from \cite[Section 2.10]{BP18P}.

Denote by $\RU_n$ the quasi-split unitary group of rank $n$ with respect to
$E/F$. Denote by $\mathrm{Temp}(\RU_n)$
 the set of irreducible admissible tempered representations
on $\RU_n(F)$. The local Langlands correspondence for $\RU_n$  established by Mok \cite{Mok}  gives
a finite-to-one map from $\mathrm{Temp}(\RU_n)$ to the set of
tempered $L$-parameters for $\RU_n$ satisfying certain properties, whose fibres are the so-called
tempered $L$-packets
of $\RU_n(F)$. Denoted by $\mathrm{Temp}(\RU_n)/\mathrm{stab}$ the set of tempered $L$-packets.  
An $L$-packet of $\RU_n(F)$ is called
square-integrable (resp. supercuspidal) if all its members are square-integrable
(resp. supercuspidal). In fact, an $L$-packet is square-integrable if and only if
one of its members is square-integrable. 

Denote by 
\[\BBC_n: {^L}\RU_n \lra {^L}G\]
the so-called stable (resp. unstable) base change morphism if $n$ is odd (resp. even). This
morphism induces the base change map from $L$-parameters of $\RU_n$ to that of $G$.
By the local Langlands correspondences for $\RU_n$ and $G$, $\BBC_n$ induces the 
base change lifting 
\[\BBC_n: \mathrm{Temp}(\RU_n)/\mathrm{stab} \lra \mathrm{Temp}(G).\]

It is 
conjectured by Flicker-Rallis (See \cite[Section 3]{A}) and proved by Matringe \cite[Theorem 5.2]{Mat11} that for  $\pi\in\Rep(G(F))$ generic, $\pi$ is $H$-distinguished if and only
if its $L$-paramater is in the image of  $\BBC_n$. 
In particular, by the above
work of Mok, for any $\pi \in \mathrm{Temp}(G)$ (so that $\pi$ is generic), $\pi$ is 
$H$-distinguished if and only if $\pi$ is in the image of $\mathrm{BC}_n$. 


There is an isomorphism of unitary representations \cite[Theorem 1]{BP18P}
\[L^2(H(F) \bs G(F)) \cong \int_{\mathrm{Temp(\RU_n)/\mathrm{stab}}}^\oplus \BBC_n(\sigma) d\sigma.\]
In particular, a $G(F)$-representation $\pi$ is  RDS if and only 
if $\pi$ is in the image of square-integrable $L$-packets under $\mathrm{BC}_n$. 

By \cite{Mok}, 
$\pi \in \mathrm{Temp}(G)$ lies in the image  of  square-integrable 
	$L$-packets  under  $\mathrm{BC}_n$ if and only 
	if $\pi=I_{P(F)}^{G(F)}\sigma$  for some parabolic subgroup $P=MN \subset G$ and some  
$H \cap M$-distinguished regular square-integrable $M(F)$-representation $\sigma$. 
Consequently, a $G(F)$-representation $\pi$ is  RDS if and only 
	if $\pi=I_{P(F)}^{G(F)}\sigma$  for some parabolic subgroup $P=MN \subset G$ and some  
$H \cap M$-distinguished regular square-integrable $M(F)$-representation $\sigma$. 

By studying Jacquet modules, (1) is obtained by Kato-Takano \cite[ 
Section 2.4]{KT20} and Smith \cite[Corollay 6.7]{Smith}. 
The new point here is that we use homological methods to prove both (1) and (2).  (We will not say anything about
(3).)

More precisely, we compute the  top degree Ext-group
$\Ext^{d^\prime(\pi)}_{F^\times\bs H(F)}\left( \pi,\BC\right )$ for $\pi = I_{P(F)}^{G(F)}\sigma\in\Rep(F^\times\bs G(F))$ (not necessarily irreducible) 
where $P = MN\subset G$ is any parabolic subgroup and  $\sigma\in\Rep(M(F))$ irreducible. 
By Mackey theory, the restriction $\pi|_H$ is ``glued'' by
induced representations associated to the double coset $P \bs G / H$. In particular, 
we have an exact short sequence
\[\tag{E} 1 \lra \tau \lra \pi|_H \lra I_{P_H(F)}^{H(F)} \delta_{P_H}^{1/2} \sigma|_{P_H} \lra 1\]
where $P_H = P \cap H$ and $\delta_{P_H}$ is the modulus character of $P_H$. Consider  the 
following piece of the induced long exact sequence
\[ \cdots \to\Ext_{F^\times\bs H(F)}^{d^\prime(\pi)-1}(\tau,\BC) \lra 
\Ext_{F^\times\bs H(F)}^{d^\prime(\pi)} ( I_{P_H(F)}^{H(F)}\delta_{P_H}^{1/2} \sigma|_{P_H} ,\BC) 
\lra \Ext_{F^\times\bs H(F)}^{d^\prime(\pi)}(\pi,\BC) \lra \Ext_{F^\times\bs H(F)}^{d^\prime(\pi)}(\tau,\BC)\to \cdots.\]
By the Kunneth formula (see Proposition \ref{Kun}) and the Schneider-Stuhler duality theorem, one can deduce from the geometry of double cosets that
\begin{enumerate}[(A)]
	\item $\dim \Ext_{F^\times\bs H(F)}^{d^\prime(\pi)}(I_{P_H(F)}^{H(F)}\delta_H^{1/2}\sigma|_{P_H} ,\BC) \leq 1$ with equality holds if and only if 
	$D(\sigma)^\vee$ is RSC (see Lemma \ref{com1} for details)
	\item when $\sigma$ is supercuspidal, $\Ext_{F^\times\bs H(F)}^{d^\prime(\pi)}(\tau,\BC) = 0$ and if moreover $\sigma$ is distinguished and 
	regular,  then $\Ext_{F^\times\bs H(F)}^{d^\prime(\pi)-1}(\tau,\BC) = 0$ (see Lemma \ref{com2} for details).
\end{enumerate}

Note that  when $\sigma$ is distinguished supercuspidal, $\pi = I_{P(F)}^{G(F)} \sigma$ 
is irreducible by the Bernstein-Zelevinsky classification (see Theorem \ref{BZ}) and $D(\pi)=\pi$  (Proposition \ref{AuZe2}), one immediately deduce  (1) (See Proposition \ref{reRSC}). 

Conversely, when $\sigma$ is supercuspidal,  $\Ext^{d'(\pi)}_{F^\times \bs H(F)}(\pi,\BC) \not= 0$  for $\pi = I_{P(F)}^{G(F)} \sigma$ implies 
 $\sigma$ is distinguished. Together with Bernstein-Zelevinsky classification, one deduces  (2) (See Proposition \ref{descent}).

\begin{remark}
In the proof (Section \ref{sec:FR}), to apply the Kunneth formula, it is convenient to consider  $\Ext^{d(\pi)}_{H(F)}(\pi,\BC)$, where $d(\pi)$ is the top degree in $\Rep(G(F))$ for $\pi$, which equals to $\Ext^{d^\prime(\pi)}_{F^\times\bs H(F)}(\pi,\BC)$ (See Lemma \ref{td} for details).
\end{remark}



Now, we consider (Q2). The relevant functorial map $\iota_*$ is $\BBC_n$.

It is reasonable to expect  the image of supercuspidal $L$-packets under $\mathrm{BC}_n$ 
admits a similar description as the square-integrable $L$-packets. Actually for symplectic groups and
orthogonal groups, explicit description for supercuspidal representations in square-integral $L$-packets are available 
(see \cite[Theorem 1.5.1]{Moeg} or \cite[Theorem 3.3]{X}), which is stronger than the following: 

\s{\bf (H).} {\em 	
	Let $\pi \in \mathrm{Temp}(G)$. Then $\pi$ is in the image  of  supercuspidal 
	$L$-packets  
	under  $\mathrm{BC}_n$ if and only if $\pi = I_{P(F)}^{G(F)}\sigma$ 
	for some parabolic subgroup $P=MN \subset G$ and some  
$H \cap M$-distinguished regular supercuspidal $M(F)$-representation $\sigma$.  }

\begin{thm}\label{topd-B} 
	Assuming {\bf (H)},  $\pi$ is  RSC if and only if it is in the
image of supercuspidal $L$-packets under $\mathrm{BC}_n$.
\end{thm}

Clearly, the Theorem \ref{topd-B} (which answers (Q2)) follows immediately from
Theorem \ref{topd} (which answers (Q1)). 

There seems no reference for the proof of {\bf (H)}. It seems approachable, for example, via the method of 
\cite{X} with results in \cite{Mok}.

\subsection{The diagonal case}

Let $(G,H) = (G_1 \times G_2,G_2)$ with $G_2\subset G_1$ embedded into $G_1\times G_2$ diagonally. 
\begin{thm}[See Theorem \ref{diagonal2}]
\label{diagonal} 
Assume $Z_G\cap H(F)\bs Z_H(F)$ is compact. 
Then  RSC representations are
      supercuspidal. If moreover 
 \begin{itemize}
     \item either $(G,H)$ is symmetric;
     \item or the geometric quotient $X:=H\bs G$ is a wavefront spherical variety and $G$ is split,
 \end{itemize} 
 RSC representations are exactly  distinguished  supercuspidal representations.
 \end{thm}

Applying Lemma \ref{lem1.1}, it is quite straightforward to show  RSC representations are supercuspidal. 
For simplicity, 
assume $Z_G \cap H = Z_H$ and $\pi = \pi_1 \boxtimes \pi_2\in\Rep(G(F))$ is  RSC with trivial central character. 
Then
\begin{align*}
    0 \not= \Ext_{Z_H(F) \bs H(F)}^{d'(\pi)}(D(\pi)^\vee,\BC) &= 
	\Ext_{Z_H(F) \bs H(F)}^{d'(\pi)}(D(\pi_1)^\vee,D(\pi_2))\\
	&= \Ext_{ Z_{G_1}(F) \bs G_1(F)}^{d'(\pi)}
\left(D(\pi_1)^\vee, I_{Z_{G_2}(F) \bs G_2(F)}^{Z_{G_1}(F) \bs G_1(F)} D(\pi_2) \right)
\end{align*}
The nonvanishing implies $d'(\pi) \leq d'(\pi_1)$ and $d'(\pi) \leq d'(\pi_2)$. As $d'(\pi) = d'(\pi_1)
+d'(\pi_2)$, we have $d'(\pi_1) = d'(\pi_2) = 0$, or equivalently, $\pi$ is supercuspidal. 

The "moreover" part follows immediately from  \cite[lemma 3.1]{Zha0}. More precisely, the first case is a direct consequence of
 the work of Kato-Takano, and  the second case is a consequence of the work of Sakellaridis-Venkatesh.

The assumptions in Theorem \ref{diagonal} are satisfied in the
following cases: 
\begin{itemize}
\item the group case: $G_1 = G_2$;
\item the orthogonal Gan-Gross-Prasad case: $G_1=\SO_{n+1}$ and $G_2=\SO_n$;
\item the triple product case: $G_1=\GL_2\times\GL_2$ and $G_2 \subset G_1$ is the diagonally embedded $\GL_2$.
\end{itemize}
Moreover, for all the above cases, RSC representations are distinguished representations arising from supercuspidal representations via $\iota_*$.

When $Z_G\cap H(F)\bs Z_H(F)$ is non-compact, 
one can only deduce $\pi_2$ is supercuspidal and  $d^\prime(\pi_1)\leq d(\pi_2)$  for RSC $\pi=\pi_1\boxtimes\pi_2\in\Rep(G(F))$ (See Proposition \ref{RSCsc}). 

In the case $G_1=\GL_{n+1}$ and $G_2=\GL_n$, one can proceed using the result of Prasad \cite[Theorem 4.1]{Pra18} on the Euler-Poincare characteristic number. 

\begin{thm}[Theorem \ref{GGP2}] \label{GGP}
Assume $G_1=\GL_{n+1}$ and $G_2=\GL_n$.  Let  $\pi=\pi_1\boxtimes\pi_2\in\Rep(G(F))$ be an irreducible  $H$-distinguished representation. Then $\pi$ is  RSC if and only if\begin{itemize}
     \item $\pi$ is supercuspidal when $n\geq2$,
     \item $\pi$ is  supercuspidal or $\pi_1=\St\otimes\eta^{-1}$ and $\pi_2=\eta$ for some character $\eta$ on $F^\times$ when $n=1$.
 \end{itemize}
 \end{thm}
This result suggests the relation between RSC representations and the functorial map $\iota_*$ could be complicated:
not all the RSC representations are from supercuspidal $L$-packets via $\iota_*$ 
(which is the identity map here). It would be interesting to explore such phenomenon.

\subsection{Notations} \label{notation}
Throughout this paper, let $F$ be a $p$-adic field and  $E/F$ be a quadratic field extension with the non-trivial $F$-automorphism $-$. 

For any  reductive group $G$ over $F$, denote the center  by $Z_G$.  For  any subgroup $Z\subset Z_G$ and any character $\omega:\ Z(F)\to\BC^\times$, let $\Rep(G(F),\omega)$ be the category of $G(F)$-representations on which $Z(F)$ acts by $\omega$.  By \cite[Section 4]{Ber92}, $\Rep(G(F),\omega)$ is abelian and contains enough projective objects and  injective objects.   For any $\pi,\sigma\in  \Rep(G(F),\omega)$, let  $\Ext_{\Rep(G(F),\omega)}^i(\pi,\sigma)$ denote the $i$-th extension group of $\pi,\sigma$ in the category  $\Rep(G(F),\omega)$.
When $Z=\{e\}$ and $\omega$ trivial, denote $\Rep(G(F),\omega)$ (resp. $\Ext_{\Rep(G(F),\omega)}^i(\pi,\sigma)$) by $\Rep(G(F))$ (resp. $\Ext_{G(F)}^i(\pi,\sigma)$). In the rest of the paper, the notation $\Rep(G(F),\omega)$  means $\omega$ is a character of $Z_G(F)$ unless otherwise specified.

For $\pi\in \Rep(G(F))$, let $\pi^\vee$ be the smooth dual. For any $\pi\in\Rep(G(E))$, let $\bar{\pi}$ be the twist of $\pi$ by the involution $-$.

For any irreducible $\pi\in\Rep(G(F))$ with cuspidal support $(M,\sigma)$, let $d(\pi)$ (resp. $d^\prime(\pi)$) be the split rank of $Z_M$ (resp. $Z_M\cap[G,G]$).

For any subgroup $H\subset G$, let $I_{H(F)}^{G(F)}$ (resp. $i_{H(F)}^{G(F)}$) denote the normalized induction (resp. compact induction) functor from $\Rep(H(F))$ to $\Rep(G(F))$.

For the reductive  group $\GL_n$ over $F$, let $T\subset B\subset \GL_n$  denote the maximal split torus of diagonal matrices and the Borel subgroup of upper triangular matrices respectively. Parabolic (resp. Levi) subgroups $P\subset \GL_n$ (resp. $M\subset P$) contains $B$ (resp. $T$) are called {\em standard}. Throughout, we will abbreviate the condition $P\subset \GL_n$ is a standard parabolic subgroup with standard Levi subgroup $M$ and unipotent radical $N$ as  $P=MN\subset \GL_n$ is a standard parabolic subgroup.




\s{\bf Acknowledgement} We thank Professor Ye Tian for his consistent encouragement. We want to especially  thank Professor Dipendra Prasad for all the inspiring exchanges. In particular, we want to mention the proof of Proposition \ref{prasad} is due to him. We also thank Professor Chong Zhang and Professor Kei-Yuen Chan for helpful discussions. Finally, we thank the anonymous referee  for very helpful and valuable comments. L. Cai is partially supported by NSFC grant No.11971254.




\section{Preliminaries}
Let  $G$ be a reductive group over  $F$. In the following, we will record some general facts on Ext-groups for the categories $\Rep(G(F))$ and $\Rep(G(F),\omega)$ with $\omega:\ Z_G(F)\to\BC^\times$. 
\begin{lem}\label{restriction}
 Take any projective object $\pi\in \Rep(G(F),\omega)$. Then for any  closed subgroup  $H\subset G$,
\begin{enumerate}[(i)]
    \item    $\pi|_{H}\in\Rep(H(F),\omega|_{Z_G\cap H})$ is projective and for any  $\sigma\in\Rep(H(F), \omega|_{Z_G\cap H})$,
$$\Ext^i_{\Rep(G(F),\omega)}(\pi,\mathrm{un}-I_{Z_GH}^G\sigma)\cong \Ext^i_{\Rep(H(F),\omega|_{Z_G\cap H})}(\pi|_H,\sigma)$$
where $\mathrm{un}-I_{Z_GH}^G$ is the unnormalized induction functor.
\item $\pi|_H\in\Rep(H(F))$ is  projective if  $Z_G\cap H(F)$ is compact. 
\end{enumerate}
\end{lem}
\begin{proof}Note that restriction induces an  equivalence of categories
$$ \Rep(H(F),\omega|_{Z_G\cap H})\cong \Rep(Z_GH(F),\omega).$$
Then by Frobenius reciprocity law, Item (i) follows from \cite[Proposition 2.2]{Pra18}.

For Item $(ii)$, note that for any $\sigma\in\Rep(H(F))$,
 $$\sigma_\omega=\{v\in \sigma\mid z\cdot v=\omega(z)v,\quad \forall\ z\in Z_G\cap H(F)\}\subset \sigma$$ is a direct summand and $\Hom_{H(F)}(\pi|_H,\sigma)=\Hom_{H(F)}(\pi,\sigma_\omega)$. Thus to show $\pi|_H\in \Rep(H(F))$ is projective, it suffices to show that for any surjection 
$\sigma_2\twoheadrightarrow \sigma_1$
in $\Rep(H(F), \omega|_{Z_G\cap H})$, one has 
$$\Hom_{H(F)}(\pi|_H,\sigma_2)\twoheadrightarrow \Hom_{H(F)}(\pi|_H,\sigma_1).$$
By the category equivalence
$\Rep(H(F), \omega|_{Z_G\cap H})\cong \Rep(Z_GH(F),\omega)$
and the Frobenius reciprocity law, there is a canonical isomorphism
$$\Hom_{\Rep(H(F),\omega_{Z_G\cap H})}(\pi|_H,\sigma_i)\cong \Hom_{\Rep(G(F),\omega)}(\pi,\mathrm{un}-I_{Z_GH}^G\sigma_i).$$
As $\pi\in\Rep(G(F),\omega)$ is projective, we are done.
\end{proof}
The following lemma is well-known.
\begin{lem}\label{boxtensor}Let $G$, $G^\prime$ be  reductive groups over $F$. If both $\pi\in \Rep(G(F),\omega)$ and $\pi^\prime\in\Rep(G^\prime(F),\omega^\prime)$ are  projective, $\pi\boxtimes\pi^\prime\in\Rep(G\times G^\prime(F),\omega\boxtimes\omega^\prime)$ is projective.
\end{lem}
\begin{proof}For any  $\sigma\in \Rep(G\times G^\prime(F))$, one has a canonical isomorphism
$$\Hom_{G\times G^\prime(F)}(\pi\boxtimes\pi^\prime,\sigma)=\Hom_{G(F)}(\pi,\Hom_{G^\prime(F)}(\pi^\prime,\sigma)).$$
 Then for any surjection 
$\sigma_2\twoheadrightarrow \sigma_1$
in $\Rep(G\times G^\prime(F), \omega\boxtimes\omega^\prime)$, $$\Hom_{\Rep(G\times G^\prime(F),\omega\boxtimes\omega^\prime)}(\pi\boxtimes\pi^\prime,\sigma_2)\twoheadrightarrow \Hom_{\Rep(G\times G^\prime(F),\omega\boxtimes\omega^\prime)}(\pi\boxtimes\pi^\prime,\sigma_1)$$
 by the assumption on $\pi$ and $\pi^\prime$.  Consequently, $\pi\boxtimes\pi^\prime\in\Rep(G\times G^\prime(F),\omega\boxtimes\omega^\prime)$ is projective.
\end{proof}
Combining Lemma \ref{restriction} and \ref{boxtensor}, one has the following straightforward corollary.
\begin{cor}\label{projective}If both $\pi\in \Rep(G(F),\omega)$ and $\sigma\in\Rep(G(F),\omega^{-1})$ are projective, then $\pi\otimes\sigma\in\Rep(Z_G(F)\bs G(F))$ is projective.
\end{cor}
\begin{prop}\label{Ext}For any $\pi\in\Rep(G(F),\omega)$ and  $\sigma\in\Rep(G(F),\omega^{-1})$, there are canonical isomorphisms
$$\Ext^i_{\Rep(Z_G(F)\bs G(F))}(\pi\otimes\sigma,\BC)\cong \Ext^i_{\Rep(G,\omega)}(\pi,\sigma^\vee)\cong \Ext^i_{\Rep(G,\omega^{-1})}(\sigma,\pi^\vee),\quad \forall\ i\in\BN.$$
\end{prop}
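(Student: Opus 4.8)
The plan is to reduce everything to the case treated in Lemma \ref{projective}. First I would establish the two isomorphisms on the level of $\Ext^0 = \Hom$, where they are immediate: for $\pi \in \Rep(G(F),\chi)$ and $\sigma \in \Rep(G(F),\chi^{-1})$, the tensor product $\pi \otimes \sigma$ carries trivial $Z_G(F)$-action, and the standard tensor-hom adjunction gives a natural isomorphism $\Hom_{\Rep(G/Z_G(F))}(\pi \otimes \sigma, \BC) \cong \Hom_{\Rep(G,\chi)}(\pi, \sigma^\vee)$ (a $G$-map $\pi \otimes \sigma \to \BC$ is the same as a pairing $\pi \times \sigma \to \BC$, i.e.\ a $G$-map $\pi \to \Hom_\BC(\sigma,\BC)$ landing in the smooth vectors $\sigma^\vee$; one checks the central characters match). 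The symmetry in $\pi$ and $\sigma$ of $\pi \otimes \sigma$ gives the second isomorphism $\Hom_{\Rep(G,\chi)}(\pi,\sigma^\vee) \cong \Hom_{\Rep(G,\chi^{-1})}(\sigma,\pi^\vee)$ the same way.

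Next I would upgrade to higher $\Ext$ by a derived-functor / universal $\delta$-functor argument. Fix $\sigma$ and regard both sides as $\delta$-functors in $\pi$. For the right-hand side $\Ext^i_{\Rep(G,\chi)}(\pi,\sigma^\vee)$, resolve $\pi$ by projectives $P_\bullet \to \pi$ in $\Rep(G(F),\chi)$. For the left-hand side, I claim $P_\bullet \otimes \sigma \to \pi \otimes \sigma$ is a projective resolution in $\Rep(G/Z_G(F))$: it is exact because $- \otimes \sigma$ is exact over $\BC$, and each $P_i \otimes \sigma$ is projective in $\Rep(G/Z_G(F))$ by Lemma \ref{projective}. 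Therefore $\Ext^i_{\Rep(G/Z_G(F))}(\pi \otimes \sigma, \BC)$ is computed by the complex $\Hom_{\Rep(G/Z_G(F))}(P_\bullet \otimes \sigma, \BC)$, while $\Ext^i_{\Rep(G,\chi)}(\pi,\sigma^\vee)$ is computed by $\Hom_{\Rep(G,\chi)}(P_\bullet, \sigma^\vee)$; these two complexes are identified term by term and compatibly with differentials by the $\Hom^0$ isomorphism from the previous paragraph, which is natural in the first variable. Hence the first isomorphism of the proposition holds in all degrees, and naturality makes it canonical. The second isomorphism follows by the same argument with the roles of $\pi$ and $\sigma$ exchanged (using a projective resolution of $\sigma$ in $\Rep(G(F),\chi^{-1})$), again invoking Lemma \ref{projective}.

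The main point to be careful about — really the only nontrivial input — is the exactness and projectivity needed to call $P_\bullet \otimes \sigma$ a projective resolution, and this is exactly what Lemma \ref{projective} supplies; the rest is the formal yoga of balanced bifunctors together with the $\Hom$-level adjunction and its compatibility with central characters. I would also remark that since both sides are effaceable $\delta$-functors in each variable (projectives being acyclic), one may alternatively phrase the whole argument as: two universal $\delta$-functors agreeing in degree $0$ agree in all degrees, which makes the canonicity transparent without choosing resolutions.
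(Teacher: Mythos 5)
Your proof is correct and follows essentially the same route as the paper's: both arguments rest on Lemma \ref{projective} to turn a projective resolution in one tensor factor into a resolution of $\pi\otimes\sigma$ in $\Rep(G/Z_G(F))$, and on the tensor--Hom adjunction to identify the resulting $\Hom$-complexes. The only organizational difference is that the paper first treats the case of projective $\pi$ (resolving $\sigma$) and then handles general $\pi$ via acyclic resolutions, whereas you invoke directly that each $P_i\otimes\sigma$ is already projective --- which is exactly what Lemma \ref{projective} asserts, since it requires only one factor to be projective --- and obtain the two isomorphisms symmetrically; both versions are valid.
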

\begin{proof} Take any projective resolution $Q_\bullet$ of $\sigma$ in $\Rep(G(F),\omega^{-1})$. Assume $\pi\in\Rep(G(F),\omega)$ is projective. Then  $\pi\otimes Q_\bullet$ is a projective resolution of $\pi\otimes\sigma\in\Rep(Z_G(F)\bs G(F))$ by Corollary \ref{projective}. Hence the complex $\Hom_{\Rep(Z_G(F)\bs G(F))}(\pi\otimes Q_{\bullet},\BC)$
computes $\Ext^i_{\Rep(Z_G(F)\bs G(F))}(\pi\otimes\sigma,\BC)$.
Note that 
$$\Hom_{\Rep(Z_G(F)\bs G(F))}(\pi\otimes Q_\bullet,\BC)\cong \Hom_{\Rep(G,\omega^{-1})}(Q_\bullet,\pi^\vee)\cong \Hom_{\Rep(G,\omega)}(\pi,Q_\bullet^\vee),$$
so it also computes $\Ext^i_{\Rep(G,\omega^{-1})}(\sigma,\pi^\vee)$ and $\Ext^i_{\Rep(G,\omega)}(\pi,\sigma^\vee)$. 

Note that $\pi$ is projective implies
$$\Ext^i_{\Rep(Z_G(F)\bs G(F))}(\pi\otimes\sigma,\BC)\cong \Ext^i_{\Rep(G,\omega)}(\pi,\sigma^\vee)=0,\ \forall\ i\geq1.$$
Consequently, $\pi\otimes\sigma$ is an acyclic object for the functor $\Hom_{Z_G(F)\bs G(F)}(-,\BC)$. 

For arbitrary $\pi\in\Rep(G(F),\omega)$, take any projective resolution $P_\bullet$. Then $P_\bullet\otimes\sigma$ is an acyclic resolution of $\pi\otimes\sigma$ with respect to $\Hom_{Z_G(F)\bs G(F)}(-,\BC)$. Therefore $$\Hom_{Z_G(F)\bs G(F)}(P_\bullet\otimes\sigma,\BC)\cong  \Hom_{\Rep(G,\omega^{-1})}(\sigma,P_\bullet^\vee)\cong \Hom_{\Rep(G,\omega)}(P_\bullet,\sigma^\vee)$$
 computes $\Ext^i_{\Rep(Z_G(F)\bs G(F))}(\pi\otimes\sigma,\BC)$ as well as $\Ext^i_{\Rep(G(F),\omega^{-1})}(\sigma,\pi^\vee)$ and $\Ext^i_{\Rep(G(F),\omega)}(\pi,\sigma^\vee)$. 
 \end{proof}
 The  following Kunneth formula is quite useful.
\begin{prop}\cite[Proposition 3.1]{Pra18}\label{Kun}Let $G$, $G^\prime$ be  reductive groups over $F$. Let $\pi_1,\pi_2\in \Rep(G(F))$ and $\sigma_1,\sigma_2\in\Rep(G^\prime(F))$ such that  one of $\pi_1$ and $\sigma_1$ has  finite length or both $\pi_2$ and $\sigma_2$ have finite length. Then
$$\Ext^i_{G\times G^\prime(F)}(\pi_1\boxtimes\sigma_1,\pi_2\boxtimes\sigma_2)=\bigoplus_{j+k=i}\Ext^j_{G(F)}(\pi_1,\pi_2)\otimes\Ext^k_{G^\prime(F))}(\sigma_1,\sigma_2).$$
\end{prop}
\begin{proof}
When $\pi_1$ or $\sigma_1$ has finite length, this is shown in  \cite[Proposition 3.1]{Pra18}. 

If both $\pi_2$ and $\sigma_2$ have finite length, then $\pi_2=(\pi_2^\vee)^\vee$ and $\sigma_2=(\sigma_2^\vee)^\vee$. Hence by duality,
$$\Ext^i_{G\times G^\prime(F)}(\pi_1\boxtimes\sigma_1,\pi_2\boxtimes\sigma_2)=\Ext^i_{G\times G^\prime(F)}(\pi_2^\vee\boxtimes\sigma_2^\vee, \pi_1^\vee\boxtimes\sigma_1^\vee).$$
\end{proof}

 \section{Aubert-Zelevinsky involution and Schneider-Stuhler duality}
 In this section, we introduce the Aubert-Zelevinsky involution and Schneider-Stuhler duality following \cite{NP20}. Let $G$ be a connected reductive group over $F$ and fix a minimal parabolic subgroup $P_\emptyset\subset G$.  Then for any finite length $\pi\in \Rep(G(F))$, the \dfn{Aubert-Zelevinsky involution $D(\pi)$} is the virtual representation in the Grothendieck group  $$I_{P_\emptyset}^{G(F)}(R_{P_\emptyset}(\pi))-\sum_{P_1}I_{P_1}^{G(F)}(R_{P_1}(\pi))+\sum_{P_2}I_{P_1}^{G(F)}(R_{P_2}(\pi))\cdots$$
 where  $P_1$ is the next larger parabolics  in $G$ containing $P_\emptyset$, $P_2$ is next larger parabolics containing $P_1$ etc and $R_{P_*}$ is the normalized Jacquet functor with respect to $P_*$. For  $\pi\in\Rep(G(F))$ irreducible, $D(\pi)$ is actually given by an honest irreducible $G(F)$-representation (up to sign), which one still denotes by $D(\pi)$, and  it is known that   $D(D(\pi))=\pi$ and $D(\pi^\vee)=D(\pi)^\vee$. Moreover, for any parabolic subgroup $P=MN\subset G$ with opposite $P^-$ and any finite length $\sigma\in\Rep(M(F))$, $D(I_{P(F)}^{G(F)}\sigma)\cong I_{P^-(F)}^{G(F)}D(\sigma)$. 
 
 In many cases, one can describe the Aubert-Zelevinsky involution explicitly.
 \begin{defn}Let $P=MN\subset G$ be any parabolic subgroup with Levi factor $M$. Let $W_G$ (resp. $W_M$) be the Weyl group of $G$ (resp. $M$) for a maximal split torus in $G$. An irreducible representation $\sigma\in \Rep(M(F))$ is called \dfn{regular} if $\sigma^w\ncong\sigma$ for any nontrivial $w\in W_M\bs W_G/W_M$. 
 \end{defn}
 \begin{prop}\cite[Proposition 2.1]{NP20}\label{AuZe1}For any regular supercuspidal irreducible $M(F)$-representation $\sigma$, $I_{P(F)}^{G(F)}\sigma$ has a unique irreducible sub-representation $S(\sigma)$ and a unique irreducible quotient representation $Q(\sigma)$. Moreover, one has $D(S(\sigma))=Q(\sigma)$ and $D(Q(\sigma))=S(\sigma)$.
 \end{prop}

 In the case $G=\GL_n$, a standard Levi group $M$ has the form $\prod_{i=1}^d\GL_{n_i}$ with $\sum_i n_i=n$. Moreover, an irreducible $M(F)$-representation $\sigma=\boxtimes\sigma_i$ is regular if and only if  the $\GL_{n_i}(F)$-representations $\sigma_i$ are mutually distinct. When all $n_i$ are equal and $\sigma_i=\sigma_1|\det|^{i-1}$,  $\sigma$  is called a \dfn{segment} for $\GL_n$. Note that the Steinberg representation $\St$ can be realized as the unique irreducible quotient of  
 \[I_{B(F)}^{\GL_n(F)}|\cdot|^{\frac{1-n}{2}}\boxtimes|\cdot|^{\frac{3-n}{2}}\cdots\boxtimes|\cdot|^{\frac{n-1}{2}}.\]
 \begin{thm}\cite{BZ,Zel} \label{BZ}
 Let  $P=MN\subset \GL_n$ be any standard parabolic subgroup and  $\sigma=\boxtimes_{i=1}^j\sigma_i\in\Rep(M(F))$ be any irreducible supercuspidal representation. Then $I_{P(F)}^{\GL_n(F)}\sigma$ is reducible if and only if for some $i\neq j$, $\sigma_i\cong\sigma_j|\det|^{\pm1}$. When $I_{P(F)}^{\GL_n(F)}\sigma$ is irreducible, $I_{P(F)}^{\GL_n(F)}\sigma\cong I_{Q(F)}^{\GL_n(F)}\sigma$ for any parabolic
 subgroup $Q$ with Levi factor $M$.
 
 If $\sigma$ is a segment, then $S(\sigma)$ is not essentially square-integrable while $Q(\sigma)$ is essentially square-integrable, which is square-integrable if and only if $\sigma_1(\frac{d-1}{2})$ is unitary.
 Moreover, all square-integrable $\GL_n(F)$-representations arise this way. 
  \end{thm}
 
 \begin{prop}\label{AuZe2}Let  $P=MN\subset \GL_n$ be any standard parabolic subgroup and  $\sigma=\boxtimes_{i=1}^j\sigma_i\in\Rep(M(F))$ be any irreducible supercuspidal representation. If $\pi=I_{P(F)}^{\GL_n(F)}\sigma$ is irreducible, $D(\pi)=\pi$. 
 \end{prop}
  \begin{proof} This follows immediately from Theorem \ref{BZ} by noting that $D(\pi)=I_{P^-(F)}^{\GL_n(F)}\sigma$ is irreducible. 
 \end{proof}
 The Aubert-Zelevinsky involution plays a crucial role in the Stuhler-Schneider duality. Recall that for irreducible $\pi\in \Rep(G(F),\omega)$ with cuspidal support $(M,\sigma)$,  $d(\pi)$ (resp. $d^\prime(\pi)$) is the rank of the maximal split torus in $Z_M$ (resp. $Z_M\cap[G,G]$).
 \begin{thm}\cite[Theorem 1,2]{NP20}\label{SSNP} 
 Let $\pi\in\Rep(G(F),\omega)$ irreducible. Then for any $\pi^\prime\in\Rep(G(F))$, one has $\Ext^i_{G(F)}(\pi,\pi^\prime)=0$ for $i>d(\pi)$ and for $0\leq i\leq d(\pi)$, there is a non-degenerate pairing 
 $$\Ext^i_{G(F)}(\pi,\pi^\prime)\times \Ext^{d(\pi)-i}_{G(F)}(\pi^\prime, D(\pi))\to \Ext_{G(F)}^{d(\pi)}(\pi,D(\pi))\cong\BC.$$
If  $\pi^\prime\in \Rep(G(F),\omega)$, then $\Ext^i_{\Rep(G(F),\omega)}(\pi,\pi^\prime)=0$ for $i>d^\prime(\pi)$ and for $0\leq i\leq d^\prime(\pi)$, there is a non-degenerate pairing 
 $$\Ext^i_{\Rep(G(F),\omega)}(\pi,\pi^\prime)\times \Ext^{d^\prime(\pi)-i}_{\Rep(G(F),\omega)}(\pi^\prime, D(\pi))\to \Ext_{\Rep(G(F),\omega)}^{d^\prime(\pi)}(\pi,D(\pi))\cong\BC.$$
  \end{thm}
 \section{Relatively supercuspidal representation}
  Let $G$ be a reductive group over $F$ with center $Z_G$ and $H\subset G$ be a reductive
$F$-subgroup.  Take any smooth character $\omega:\ Z_GH(F)\to\BC^\times$. Let  $\eta:=\omega|_{H(F)}$ and denote $\omega|_{Z_G(F)}$ again by $\omega$. An irreducible representation $\pi\in\Rep(G(F),\omega)$ is called 
\begin{itemize}
	\item {\em ($(H,\eta)$-)distinguished} if
	\[\Hom_{H(F)}(\pi,\eta) = \Hom_{Z_GH(F)}(\pi,\omega)=
	\Hom_{G(F)}\left( \pi, I_{Z_GH(F)}^{G(F)}\omega \right) \not= 0.\]
	\item {\em ($(H,\eta)$-)relatively discrete series} (when $\omega$ is unitary) if $\pi$ is $(H,\eta)$-distinguished and 
	\[\Hom_{G(F)}\left( \pi, L^2(Z_GH(F) \bs G(F), \omega) \right)\neq0,\]
	where $L^2(Z_GH(F) \bs G(F), \omega)$  is the $L^2$-completion of $i_{Z_GH(F)}^{G(F)}\omega$.
	\item {\em ($(H,\eta)$-)relatively supercuspidal} if $\pi$ is distinguished and 
	\[\Hom_{G(F)}\left( \pi, I_{Z_GH(F)}^{G(F)}\omega \right)  = \Hom_{G(F)}
	\left( \pi, i_{Z_GH(F)}^{G(F)}\omega \right).\]
\end{itemize}
When $\eta$ is trivial, we usually omit it (or sometimes $(H,\eta)$) in notations. Note that if $\omega$ is unitary and  $\pi\in\Rep(G(F),\omega)$ is RSC, then $\pi$ is RDS.
\begin{lem}\label{td}For any irreducible $\pi\in\Rep(G(F),\omega)$ such that $\dim\Hom_{H(F)}(\pi,\BC)<\infty$, one has
 \[\dim\Hom_{H(F)}\left(\pi,\eta\right)\geq\dim\Hom_{G(F)}\left(\pi,i_{Z_GH(F)}^{G(F)}\omega\right)=\dim\Ext^{d^\prime(\pi)}_{\Rep(H(F),\eta^{-1}|_{Z_G\cap H(F)})}(D(\pi)^\vee,\eta^{-1}).\] 
If  $Z_G\cap H(F)\bs Z_G(F)$ is compact, one has 
\[\dim\Hom_{G(F)}\left(\pi,i_{Z_GH(F)}^{G(F)}\omega\right)=\dim\Ext^{d(\pi)}_{H(F)}(D(\pi)^\vee,\eta^{-1}).\] 
\end{lem}
 \begin{proof}Note that 
 $$\Hom_{H(F)}(\pi,\eta)=\Hom_{Z_GH(F)}(\pi,\omega)=\Hom_{G(F)}(\pi,I_{Z_GH(F)}^{G(F)}\omega)\supset\Hom_{G(F)}(\pi,i_{Z_GH(F)}^{G(F)}\omega).$$
 By the Schnieder-Stuhler duality in Theorem \ref{SSNP}, 
 \begin{align*}
 \dim\Hom_{G(F)}(\pi,i_{Z_GH(F)}^{G(F)}\omega)&=\dim\Ext^{d^\prime(\pi)}_{\Rep(G(F),\omega)}(i_{Z_GH(F)}^{G(F)}\omega,D(\pi))\\
 &=\dim\Ext_{\Rep(G(F),\omega^{-1})}^{d^\prime(\pi)}\left(D(\pi)^\vee, I_{Z_GH(F)}^{G(F)}\omega^{-1}\right).  
 \end{align*}
By Frobenius reciprocity and   the  category equivalence $$\Rep(Z_GH(F),\omega|_{Z_G(F)}^{-1})\cong \Rep(H(F), \omega^{-1}|_{Z_G\cap H(F)}),$$
one deduce
$$\Ext_{\Rep(G(F),\omega^{-1})}^{d^\prime(\pi)}\left(D(\pi)^\vee, I_{Z_GH(F)}^{G(F)}\omega^{-1}\right)=\Ext^{d^\prime(\pi)}_{\Rep(H(F),\eta^{-1}|_{Z_G\cap H(F)})}(D(\pi)^\vee,\eta^{-1})$$
When $Z_G\cap H(F)\bs Z_G(F)$ is compact, one has 
\[\Hom_{G(F)}\left(\pi,i_{Z_GH(F)}^{G(F)}\omega\right)=\Hom_{G(F)}\left(\pi, i_{H(F)}^{G(F)}\eta\right).\] 
By the Schnieder-Stuhler duality in Theorem \ref{SSNP} and Frobenius reciprocity agian,
\begin{align*}
  \dim\Hom_{G(F)}(\pi,i_{H(F)}^{G(F)}\omega)&=\dim\Ext^{d(\pi)}_{G(F)}(i_{H(F)}^{G(F)}\eta,D(\pi))\\
  &=\dim\Ext_{G(F)}^{d(\pi)}\left(D(\pi)^\vee, I_{H(F)}^{G(F)}\eta^{-1}\right)\\
  &=\dim\Ext^{d(\pi)}_{H(F)}\left(D(\pi)^\vee,\eta^{-1}\right).
\end{align*}
 \end{proof}
 \begin{prop}\label{tprsc}Let  $\pi\in\Rep(G(F),\omega)$ be an irreducible representation with $\dim\Hom_{H(F)}(\pi,\BC)<\infty$.  
 Then
 \[\dim \Hom_{H(F)}(\pi,\BC) \geq \dim\Ext^{d^\prime(\pi)}_{\Rep(H(F),\omega^{-1}|_{Z_G\cap H(F)})}(D(\pi)^\vee,\omega^{-1}),\]
and $\pi$ is RSC if and only if 
 \[0\neq\dim\Hom_{H(F)}\left( \pi, \eta \right)=\dim\Ext^{d^\prime(\pi)}_{\Rep(H(F),\omega^{-1}|_{Z_G\cap H(F)})}(D(\pi)^\vee,\omega^{-1}),\]
Moreover,
 \begin{itemize}
 \item if  $Z_G\cap H(F)\bs Z_G(F)$ is compact, $\pi$ is RSC if and only if
 \[0\neq\dim\Hom_{H(F)}\left( \pi, \eta \right)=\dim\Ext^{d(\pi)}_{H(F)}(D(\pi)^\vee,\omega^{-1}).\]
     \item if $\pi$ is  supercuspidal, then $\pi$ is RSC if and only if 
 \[0\neq\dim\Hom_{H(F)}\left( \pi, \eta \right)=\dim\Hom_{H(F)}\left(\pi^\vee,\eta^{-1}\right)\]
 \end{itemize}
 \end{prop}
 \begin{proof}The criterion for relatively supercuspidality follows directly from Lemma \ref{td}. When $\pi$ is supercuspidal, $d^\prime(\pi)=0$ and $D(\pi)=\pi$ by Proposition \ref{AuZe1}.  Thus the criterion follows.
 \end{proof}

\section{The Flicker-Rallis case} \label{sec:FR}
 In this section,   let $H=\GL_n\subset G=\Res_{E/F}\GL_n$. We start with the geometry of double cosets. Let $P=MN\subset G$ be any standard parabolic subgroup and  identify $W_M\bs W_G /W_M$ with the set ${}^MW_G^M$ of left and right $W_M$-reduced elements in $W_G$, where  $W_G$ (resp. $W_M$) is the Weyl group of $G$ (resp. $M$). Let $W_2=\{w\in W_G\mid w^2=1\}$ and  ${}^MW_2^M=\{w\in{}^MW_G^M\mid w^2=1\}$. As explained in \cite[Section 13]{JLR99}, every double coset in  $P(
F)\bs G(F)/H(F)$ is represented by $\eta\in G(F)$ such that $w=\eta\bar{\eta}^{-1}\in {}^MW^M_2$ and this gives a bijection 
$$P(F)\bs G(F)/H(F) \stackrel{\sim}{\lra} {}^MW^M_2.$$ 
Denote by
\begin{align*}
   P(w):=M\cap wPw^{-1},\quad M(w):=M\cap wMw^{-1},\quad N(w):=M\cap wNw^{-1}.
\end{align*}
Then by \cite[Section 2.4]{Off17}, $P(w)=M(w)N(w)$ is a standard parabolic subgroup of $M$. Moreover, let $\pr_M$ be the  projection from $P$ to $M$ and set
$$P_\eta:=\eta H\eta^{-1}\cap P;\quad M_\eta:=\eta H\eta^{-1}\cap M.$$
When $w=1$, we write $P_H$ (resp. $M_H$) instead of $P_\eta$ (resp. $M_\eta$). 

Let $N_\eta$ be the unipotent radical of $P_\eta$ and let $\delta_\eta$ (resp. $\delta_{P}$) be the modulus character of $P_\eta$ (resp. $P$). Then by \cite[Lemma 3.2, 3,3 \& Corollary 6.9]{Off17}, 
\begin{itemize}
    \item $P_\eta=M_\eta N_\eta$;
    \item $\pr_M(N_\eta)=N(w)$ is a normal subgroup of $\pr_M(P_\eta)=M_\eta N(w)\subset P(w)$;
    \item $\delta_{P}|_{P_\eta}=\delta_\eta^2$. 
\end{itemize} 
Note that $N(w)=\{e\}$ if and only if $w$ normalizes $M$, i.e. $M=w M w^{-1}$. In this case, the corresponding  double coset is called  \dfn{$P$-admissible} in \cite[Section 13]{JLR99} and one has  $N_\eta=\eta H\eta^{-1}\cap N$.
\begin{example}
Assume $n=3$ and $P=B$. Then there are $4$ double cosets, represented by $\eta_i$, $i=1,\ldots,4$
   with $w_i = w(\eta_i)$ as following
   \[w_1 = 1, \quad w_2 = \left(\begin{matrix} 0 & 1 & 0 \\ 1 & 0 & 0 \\ 0 & 0 & 1 \end{matrix}\right), \quad
     w_3 = \left(\begin{matrix} 1 & 0 & 0 \\ 0 & 0 & 1 \\ 0 & 1 & 0 \end{matrix}\right), \quad 
     w_4 = \left(\begin{matrix} 0 & 0 & 1 \\ 0 & 1 & 0 \\ 1 & 0 & 0 \end{matrix}\right).\]
   Note that
   \[B_\eta = \left\{ b \in B(F) \Big| w(\eta)^{-1} b w(\eta) = \bar{b} \right\}.\]
   The double $B(E)\eta_1\GL_3(F)$ is closed with $B_{\eta_1}=B\cap H$. 
   The double cosets $B(F)\eta_iH(F)$, $i=2,3$  are locally closed with
   \[M_{\eta_2} = \left\{\left(\begin{matrix} \alpha & 0 & 0 \\ 0 & \bar{\alpha} & 0 \\ 0 & 0 & \beta 
	   \end{matrix}\right) \Big| \alpha \in E^\times, \beta \in F^\times \right\}, \quad
     N_{\eta_2} = \left\{\left(\begin{matrix} 1 & 0 & u \\ 0 & 1 &\bar{u}  \\ 0 & 0 & 1 
	   \end{matrix}\right) \Big| u \in E^\times\right\}\]
   \[M_{\eta_3} = \left\{\left(\begin{matrix} \beta & 0 & 0 \\ 0 & \alpha & 0 \\ 0 & 0 & \bar{\alpha} 
	   \end{matrix}\right) \Big| \alpha \in E^\times, \beta \in F^\times \right\}, \quad
	   N_{\eta_3} = \left\{\left(\begin{matrix} 1 & u & \bar{u} \\ 0 & 1 & 0   \\ 0 & 0 & 1 
	   \end{matrix}\right) \Big| u \in E^\times\right\}.\]
  The double coset $B(F)\eta_4H(F)$ is open with
  \[M_{\eta_4} = \left\{\left(\begin{matrix} \alpha & 0 & 0 \\ 0 & \beta & 0 \\ 0 & 0 & \bar{\alpha} 
  \end{matrix}\right) \Big| \alpha \in E^\times, \beta \in F^\times \right\}, \quad N_{\eta_4} = 1.\]
\end{example}
 Let  $\pi=I_{P(F)}^{G(F)}\sigma$ for some irreducible $\sigma\in \Rep(M(F))$ with  $M=\prod_{i=1}^d\Res_{E/F}\GL_{n_i}$. 
 As mentioned in  the introduction, one has the following exact sequence of $H(F)$-representations 
\[\tag{E} 0\lra \tau\lra \pi\lra I_{P_H(F)}^{H(F)}\delta_{P_H}^{1/2}\sigma|_{M_H(F)}\lra 0\]
where $\tau$ has a filtration with graded pieces $$V_\eta:=i_{H(F)\cap \eta^{-1}P(F)\eta}^{H(F)}(\delta_{P_\eta}^{1/2}\sigma|_{P_\eta(F)})^\eta$$
with $w=\eta\bar{\eta}^{-1}\neq1\in {}^MW_2^M$. Here
$(-)^\eta$ is the conjugation functor that transfers $P_\eta(F)$-representations into $\eta^{-1}P(F)\eta\cap H(F)$-representations. 
\begin{lem}\label{com1}One has $\dim\Ext^{d(\pi)}_{H(F)}(I_{P_H(F)}^{H(F)}\delta_{P_H}^{1/2}\sigma|_{M_H(F)},\BC)\leq 1$ with the equality holds if and only if  $D(\sigma)^\vee$ is $M\cap H$- RSC.  
\end{lem} 
\begin{proof}By Frobenius reciprocity law, 
$$\Ext^{d(\pi)}_{H(F)}(I_{P_H(F)}^{H(F)}\delta_{P_H}^{1/2}\sigma|_{M_H(F)},\BC)\cong\Ext^{d(\pi)}_{ M_H(F)}(\delta_{P_H(F)}^{1/2}\sigma|_{M_H(F)},\delta^{1/2}_{P_H(F)})\cong\Ext^{d(\pi)}_{M_H(F)}(\sigma|_{M_H(F)},\BC)$$
 Then  by  the Kunneth formula in Proposition \ref{Kun} and the fact $d(\pi)=\sum_{i=1}^d d(\sigma_i)$,
$$\Ext_{M_H(F)}^{d(\pi)}(\sigma|_{M_H(F)},\BC)=\otimes_{i=1}^d\Ext^{d(\sigma_i)}_{\GL_{n_i}(F)}(\sigma_i,\BC).$$
Denote the central character of $\sigma_i$ by $\omega_i$. Note that by Theorem \ref{SSNP}, 
$$\dim_\BC\Ext^{d(\sigma_i)}_{\GL_{n_i}(F)}(\sigma_i,\BC)=\dim_\BC\Hom_{\GL_{n_i}(F)}(i_{\GL_{n_i}(F)}^{\GL_{n_i}(E)}\BC,D(\sigma_i))$$
which is clearly zero unless $\omega_i|_{F^\times}=1$. When $\omega_i|_{F^\times}=1$,  by Lemma \ref{td} 
$$\dim\Ext^{d(\sigma_i)}_{\GL_{n_i}(F)}(\sigma_i,\BC)=\dim\Hom_{\GL_{n_i}(E)}(D(\sigma_i)^\vee, i_{E^\times \GL_{n_i}(F)(F)}^{\GL_{n_i}(E)}\omega_i^{-1})$$
which is $\leq 1$ with the equality holds if and only if $D(\sigma_i)^\vee$ is RSC for the pair $(\Res_{E/F}\GL_{n_i},\GL_{n_i})$.
\end{proof}
\begin{lem}\label{com2}For any $\eta$ such that $w=\eta\bar{\eta}^{-1}\neq1$, $\Ext^{d(\pi)}_{H(F)}(V_\eta,\BC)=0$ for $\sigma$  supercuspidal and  $\Ext^{d(\pi)-1}_{H(F)}(V_\eta,\BC)=0$ for  $\sigma$  regular supercuspidal and $H\cap M$-distinguished.
\end{lem} 
\begin{proof}By duality and the Frobenius reciprocity law, for each $i\in\BN$ $$\Ext^{i}_{H(F)}(V_\eta,\BC)\cong\Ext^{i}_{\eta H(F)\eta^{-1}}(i_{P_\eta(F)}^{\eta H(F)\eta^{-1}}(\delta_{P_\eta}^{1/2}\sigma|_{P_\eta(F)}),\BC)\cong\Ext^i_{ P_\eta(F)}(\sigma|_{P_\eta(F)},\BC)$$  By the adjunction between parabolic induction and Jacquet functor, one has $$\Ext^i_{ P_\eta(F)}(\sigma|_{P_\eta(F)},\BC)
\cong\Ext^i_{M_\eta(F)}((\sigma|_{P_\eta(F)})_{N_\eta(F)},\BC)\cong\Ext^i_{M_\eta(F)}((\sigma|_{P(w)(F)})_{N(w)(F)},\BC)$$
where $(-)_*$ means taking co-invariant of $*$. 

If $N(w)\neq\{e\}$,  $(\sigma|_{M_\eta(F)N(w)(F)})_{N(w)(F)}$=0  and consequently $\Ext^{i}_{H(F)}(V_\eta,\BC)=0$ for any $i\in\BN$ since $\sigma$ is supercuspidal. 

When $N(w)=\{e\}$, one has  $wMw^{-1}=M$. As explained in \cite[Section 2.1]{G}, there is an involution $\epsilon\in S_d$ such that $n_i=n_{\epsilon(i)}$ and $$M_\eta(F)=\{(g_i)\in M(F)\mid g_{\epsilon(j)}=\bar{g_j}\}.$$ 
 For any $1\leq i\leq d$,
 \begin{itemize}
     \item if $\epsilon(j)=j$,  let $\tilde{\sigma}_j:=\sigma_j$, $H_j:=\GL_{n_j}(F)$ and $G_j:= \GL_{n_j}(E)$;
     \item if $\epsilon(j)\neq j$, let $\tilde{\sigma}_j:=\sigma_j\boxtimes\sigma_{\epsilon(j)}$, $H_j:= \{(g,\bar{g})\mid g\in\GL_{n_j}(E)\}$ and $G_j:= \GL_{n_j}(E)\times\GL_{n_{\epsilon(j)}}(E)$
 \end{itemize} 
Let $J\subset \{1,\cdots,d\}$ represent the  orbits of $\epsilon$. The assumption $\eta\bar{\eta}^{-1}\neq1$ implies $\epsilon\neq1$ and $|J|<d=d(\pi)$. Then by the Kunneth formula in Proposition \ref{Kun}, 
 \begin{align*}
\Ext^{i}_{M_\eta(F)}(\sigma, \BC)\cong\oplus_{j\in J, \sum n(j)=i}\Ext^{n(j)}_{H_j}(\tilde{\sigma}_j,\BC)
 \end{align*}
  Note that when  $\epsilon(j)\neq j$, one has
    $$\Ext^i_{H_j}(\tilde{\sigma}_j, \BC)\cong\Ext^i_{\GL_{n_j}(E)}(\sigma_j\otimes\bar{\sigma}_{\epsilon(j)},\BC)\cong\Ext^i_{\GL_{n_j}(E)}(\sigma_j,\bar{\sigma}_{\epsilon(j)}^\vee)$$
Thus
 $\Ext^i_{H_j}(\tilde{\sigma}_j, \BC)=0$ for $i> 1$ by Theorem \ref{SSNP} and  consequently, $\Ext^{d(\pi)}_{M_\eta(F)}(\sigma, \BC)=0$.

When  $\sigma$ is $H\cap M$-distinguished,  $\sigma_i\cong\bar{\sigma}^\vee_i$ by \cite{Fli91}. If $\pi$ is moreover supercuspidal and  regular  for $G$,  then 
 $$\dim_\BC\Ext^1_{\GL_{n_j}(E)}(\sigma_j,\bar{\sigma}_{\epsilon(j)}^\vee)=\dim_\BC\Hom_{\GL_{n_j}(E)}(\sigma_{\epsilon(j)},\sigma_j)=0.$$
 for $\epsilon(j)\neq j$. Thus $\Ext^{d(\pi)-1}_{M_\eta(F)}(\sigma, \BC)=0$.
\end{proof} 
\begin{prop}\label{reRSC}If $\pi = I_{P(F)}^{G(F)} \sigma$ where 
$\sigma\in\Rep(M(F))$ is  $H\cap M$-distinguished regular supercuspidal, then  $$\dim\Ext^{d(\pi)}_{H(F)}(\pi,\BC)=\dim\Ext^{d(\pi)}_{H(F)}(\pi^\vee,\BC)=1.$$
Consequently,  both $\pi$ and $\pi^\vee$ are RSC.
\end{prop}
\begin{proof}By Theorem \ref{BZ} and Proposition \ref{AuZe2}, $\pi$ is irreducible and $D(\pi)=\pi$. Note that $\pi\cong\bar{\pi}^\vee$ by \cite{Fli91}, so   the same holds for $\pi^\vee=\pi=I_{P(F)}^{G(F)}\sigma^\vee$. Note that $\dim \Hom_{H(F)}(\pi,\BC) \leq 1$. 
By Proposition \ref{tprsc}, it suffices to show 
$$\dim\Ext^{d(\pi)}_{H(F)}(\pi,\BC)=\dim\Ext^{d(\pi)}_{H(F)}(\pi^\vee,\BC)=1.$$

From the short exact sequence (E) above, one deduce a long exact sequence
$$\cdots\to\Ext^{d(\pi)-1}_{H(F)}(\tau,\BC)\to \Ext^{d(\pi)}_{H(F)}(I_{P_H(F)}^{H(F)}\delta_{P_H}^{1/2}\sigma|_{M_H(F)},\BC)\to\Ext^{d(\pi)}_{H(F)}(\pi,\BC)\to 
\Ext^{d(\pi)}_{H(F)}(\tau,\BC)\to0$$
By Lemma \ref{com1},  $\Ext^{d(\pi)}_{H(F)}(I_{P_H(F)}^{H(F)}\delta_{P_H}^{1/2}\sigma|_{M(F)},\BC)=\BC$. By Lemma \ref{com2},  $$\Ext^{d(\pi)-1}_{H(F)}(\tau,\BC)=\Ext^{d(\pi)}_{H(F)}(\tau,\BC)=0.$$  Hence $\Ext^{d(\pi)}_{H(F)}(\pi,\BC)=\BC$. Similarly, $\Ext^{d(\pi)}_{H(F)}(\pi^\vee,\BC)=\BC$.
\end{proof}
\begin{prop}\label{AZRSC}If $\pi$ is RSC, then $D(\pi)=\pi$.
\end{prop}
\begin{proof}Take any injection $D(\pi)^\vee\hookrightarrow I_{P(F)}^{G(F)}\sigma$ with $P=M N\subset G$ standard and $\sigma\in\Rep(M(F))$ cuspidal.
 By Proposition \ref{tprsc},  $\Ext^{d(\pi)}_{H(F)}(I_{P(F)}^{G(F)}\sigma,\BC)\neq0$. By the above short exact sequence and Lemma  \ref{com2}, $\Ext^{d(\pi)}_{H(F)}(I_{P_H(F)}^{G(F)}\delta_{P_H}^{1/2}\sigma|_{M_H(F)},\BC)\neq0.$
 By Lemma \ref{com1}, $\sigma$ is distinguished. By Theorem \ref{BZ}, $I_{P(F)}^{G(F)}\sigma$ is  irreducible and hence $D(\pi)=I_{P(F)}^{G(F)}\sigma^\vee$. By Proposition \ref{AuZe2}, $\pi=D(\pi)$.
\end{proof}

\begin{prop}\label{descent}If $\pi=I_{P(F)}^{G(F)}\sigma$  with $\sigma\in\Rep(M(F))$ square-integrable is  RSC, then $\sigma$ must be distinguished and supercuspidal. In particular, RSC discrete series are supercuspidal.
\end{prop}
\begin{proof}By assumption, $\pi=D(\pi)=I_{P^{-}(F)}^{G(F)}D(\sigma) = I_{P(F)}^{G(F)} D(\sigma)$. 
Here, the last equation is from  Theorem \ref{BZ}. Being induced from a discrete series, $\pi$ is generic. Hence
$D(\sigma)$ must also be generic, which is the case only if $\sigma$ itself is cuspidal.
\end{proof}

We now can prove Theorem \ref{topd}. For the convenience of readers, we restate the theorem here.
\begin{thm}  Let $\pi\in \Rep(G(F))$ be an
 irreducible representation. Then $\pi$   is RSC if and only if 
$\pi = I_{P(F)}^{G(F)}\sigma$ 
for some parabolic subgroup $P=MN \subset G$ and some  
$H \cap M$-distinguished regular supercuspidal $M(F)$-representation $\sigma$.  
\end{thm}
\begin{proof}
The if part is established in Proposition \ref{reRSC}. For the only if part, let $\pi\in\Rep(G(F))$ be RSC, in particular RDS. By the Plancherel decomposition
of $L^2(H \bs G)$ established by Beuzart-Plessis in \cite{BP18P} and by the work of
Mok \cite{Mok} (see the brief review in the introduction),  , $\pi=I_{P(F)}^{G(F)} \sigma$ for some regular distinguished  square-integrable $\sigma\in\Rep(M(F))$. 
By Proposition \ref{descent}, $\sigma$ is actually supercuspidal and we are done.
\end{proof}
To conclude this section, we record another approach, suggested by Prof. D. Prasad, to Corollary \ref{Stein-sub} in the case $n=2$ by computing Jacquet modules. 
	\begin{prop}\label{prasad}Let $G=\Res_{E/F}\GL_2$ and $H=\GL_2$. 
	For $\pi=I_{B(F)}^{G(F)}\chi_1\boxtimes\chi_2\in\Rep(G(F))$ irreducible,  $$\dim\Hom_{H(F)}(\St,\pi)\leq1$$
	with the equality holds if and only if $\chi_1\neq\chi_2$ and $\chi_1|_{F^\times}=\chi_2|_{F^\times}=1$.
			\end{prop}
			\begin{proof}By the Mackey theory, there exists an exact sequence of $H(F)$-representations
	$$0\to \BC\to I_{B_H(F)}^{H(F)}\delta_{B_H}^{-1/2}\to\St\to0.$$
which leads to a long exact sequence
	$$0\to\Hom_{H(F)}(\St,\pi)\to\Hom_{H(F)}(I_{B_H(F)}^{H(F)}\delta_{B_H}^{-1/2},\pi)\to \Hom_{H(F)}(\BC,\pi)\to\cdots$$
Let $\CK(\pi)$ be the Kirillov model of $\pi$ with respect to any nontivial additive character $\psi:\ F\bs E\to\BC^\times$. Then clearly $\Hom_{H(F)}(\BC,\CK(\pi))=0$, and hence $$\Hom_{H(F)}(\St,\pi)=\Hom_{H(F)}(I_{B_H(F)}^{H(F)}\delta_{B_H}^{-1/2},\pi).$$

Let $B=TN$ be standard decomposition. 
Then  by Bernstein's second adjointness theorem, 
$$\Hom_{H(F)}(I_{B_H(F)}^{H(F)}\delta_{B_H}^{-1/2},\pi)=\Hom_{T_H(F)}(\delta_{B_H}^{-1}, \pi_{N_H^-(F)})=\Hom_{T_H(F)}(\delta_{B_H}, \pi_{N_H(F)}).$$
Note that one has an exact sequence of $B(F)$-representations
$$0\to\CC_c^\infty(E^\times)\to\CK(\pi)\to \pi_{N(F)}\to0$$
Let $\CK_F$	be the image of $\CK(\pi)$ under the restriction (to $F^\times$) map and $J_F$ be the cokernel of $\CC_c^\infty(F^\times)\hookrightarrow \CK_F$. Then by taking $N_H(F)$-coinvariants, one gets a commutative diagram of $T_H(F)$-modules
$$\xymatrix{ 0 \ar[r] & \CC_c^\infty(F^\times)   \ar[d]^{=} \ar[r] & \CK(\pi)_{N_H(F)} \ar[d]^{\res} \ar[r] & \pi_{N(F)}  \ar[d] \ar[r]& 0\\
 0 \ar[r] & \CC_c^\infty(F^\times)   \ar[r] &\CK_F  \ar[r] & J_F \ar[r] & 0 }$$
By \cite[Theorem 4.7.2]{Bum97},  
\[ 
\dim J_F=\begin{cases} 1 \ & \text{if}\ \chi_1|_{F^\times}= \chi_2|_{F^\times}\ \text{and}\ \chi_1\neq\chi_2;\\ 
	2 \ & \text{otherwise}\end{cases}\]
By \cite[Lemma 6.3]{PT11}, $\CC_c^\infty(E^\times)_{N_H(F)}=\CC_c^\infty(F^\times)$. Since $\res$ is surjective by definition, we deduce by the  Snake Lemma that 
\begin{itemize}
	\item if  $\chi_1=\chi_2$ or $\chi_1|_{F^\times}\neq \chi_2|_{F^\times}$, $\CK(\pi)_{N_H(F)}=\CK_F$;
	\item if $\chi_1|_{F^\times}=\chi_2|_{F^\times}$ and $\chi_1\neq\chi_2$, $\ker(\res)\cong \chi|_{T_H(F)}\delta_{B_H}$ is one-dimensional
\end{itemize}
Since $\CK_F$ contains no one-dimensional $T_H(F)$-subrepresentations, we deduce that 
 $\dim \Hom_{H(F)}(\St,\pi)\leq 1$ and the equality holds if and only if  $\chi_1|_{F^\times}=\chi_2|_{F^\times}=1$ and $\chi_1\neq\chi_2$
 from the exact sequence \[0\to \ker (\res)\to \CK(\pi)_{N_H(F)}\to \CK_F\to0.\]

			\end{proof}
\section{The diagonal case}\label{sec:dia}
 Let $G_2\subset G_1$ are reductive groups over $F$.  In this section, we will consider the diagonal case $G=G_1\times G_2$ and view $H=G_2$ as a subgroup of $G$ via the diagonal embedding. 
\begin{prop}\label{RSCsc}If $\pi=\pi_1\boxtimes \pi_2\in\Rep(G(F),\omega)$ is RSC, then $\pi_2$ is supercuspidal and $d^\prime(\pi_1)\leq d(\pi_2)$. If moreover $(Z_G\cap H(F))\bs Z_H(F)$ is compact, then $\pi$ is  supercuspidal.
 \end{prop}
 \begin{proof}
 Assume  $\pi_i\in\Rep(G_i(F),\omega_i)$.   Then by  Proposition \ref{Ext},
 \begin{align*}
     \Ext^{d^\prime(\pi)}_{\Rep(H(F),\eta^{-1}|_{Z_G\cap H(F)})}(D(\pi)^\vee,\eta^{-1})&=\Ext^{d^\prime(\pi)}_{\Rep(G_2(F), \omega_2\eta^{-1}|_{Z_{G_1}\cap G_2(F)})}(D(\pi_1)^\vee, D(\pi_2)\otimes\eta^{-1})\\
 &=\Ext^{d^\prime(\pi)}_{\Rep(Z_{G_1}(F)G_2(F),\omega_1^{-1})}(D(\pi_1)^\vee, D(\pi_2)\otimes\eta^{-1})\\
 &=\Ext^{d^\prime(\pi)}_{\Rep(G_1(F),\omega^{-1}_1)}(D(\pi_1)^\vee, I_{Z_{G_1}(F)G_2(F)}^{G_1(F)}D(\pi_2)\otimes\eta^{-1})
 \end{align*}
where in the last row, $Z_{G_1}(F)$ acts on $D(\pi_2)\otimes\eta^{-1}$ by $\omega_i^{-1}$. 

Note that $d^\prime(\pi)=d^\prime(\pi_1) +d^\prime(\pi_2)$ and $$\Ext^{d^\prime(\pi)}_{\Rep(G_2(F), \omega_2\eta^{-1}|_{Z_{G_1}\cap G_2(F)})}(D(\pi_1)^{\vee}, D(\pi_2)\otimes\eta^{-1})\subset \Ext^{d^\prime(\pi)}_{G_2(F)}(D(\pi_1)^\vee, D(\pi_2)\otimes\eta^{-1})$$
By Proposition \ref{tprsc} and Theorem \ref{SSNP}, $\pi$ is RSC implies $d^\prime(\pi_2)=0$ and $d^\prime(\pi_1)\leq d(\pi_2)$.

When $Z_G\cap H(F)\bs Z_H(F)$ is compact,
$$\Ext^{d^\prime(\pi)}_{\Rep(G_2(F), \omega_2\eta^{-1}|_{Z_{G_2}(F)})}(D(\pi_1)^{\vee,\prime}, D(\pi_2)\otimes\eta^{-1})\cong \Ext^{d^\prime(\pi)}_{\Rep(G_2(F), \omega_2\eta^{-1}|_{Z_{G_1}\cap G_2(F)})}(D(\pi_1)^\vee, D(\pi_2)\otimes\eta^{-1})$$
where $D(\pi_1)^{\vee,\prime}\subset D(\pi_1)^{\vee}$ is the direct summand on which $Z_{G_2}(F)$ acts via $\omega_2\eta^{-1}$.  Consequently, $d^\prime(\pi)\leq d^\prime(\pi_2)=0$ and $\pi$ is supercuspidal.
\end{proof}
 \begin{thm}\label{diagonal2} Let $G=G_1\times G_2$ and $H= G_2$ viewed as a subgroup of $G$ via the diagonal embedding. 
 Assume $Z_G\cap H(F)\bs Z_H(F)$ is compact and 
 \begin{itemize}
     \item either $(G,H)$ is symmetric;
     \item or the geometric quotient $X:=H\bs G$ is a wavefront spherical variety and $G$ is split.
 \end{itemize} 
 For any  irreducible $\pi\in\Rep(G(F))$, $\pi$ is RSC  if and only if $\pi$ is distinguished and supercuspidal.
 \end{thm}
 \begin{proof}The if part is just Proposition  \ref{RSCsc}. The only if part follows from \cite[lemma 3.1]{Zha0}.
 \end{proof}
Finally we consider the case $G_1=\GL_{n+1}$ and $G_2=\GL_n$ where $G_2$ is viewed as a subgroup of $G_1$ via the embedding $$\GL_n\hookrightarrow\GL_{n+1},\quad g\mapsto \begin{pmatrix} g & 0 \\ 0 & 1\end{pmatrix}.$$
It is well-known the pair $(G,H)$ is Gelfand and generic $\pi\in\Rep(G(F))$ is distinguished. 
\begin{thm}\label{GGP2} Let  $\pi=\pi_1\boxtimes\pi_2\in\Rep(G(F))$ be  an irreducible representation. Then $\pi$ is  RSC if and only if\begin{itemize}
     \item $\pi$ is supercuspidal when $n\geq2$,
     \item $\pi$ is  supercuspidal or $\pi_1=\St\otimes\eta^{-1}$ and $\pi_2=\eta$ for some character $\eta$ on $F^\times$ when $n=1$.
 \end{itemize}
 \end{thm}
 \begin{proof}
Assume $\pi$ is RSC. By Proposition \ref{RSCsc},  $\Ext^1_{H(F)}(D(\pi_1^\vee), D(\pi_2)) \neq 0$, $\pi_2$ is supercuspidal and $d^\prime(\pi_1)\leq 1$. Then by \cite[Theorem 4.2]{Pra18}, $$\EP_H(D(\pi)^\vee):=\dim\Hom_{H(F)}(D(\pi_1)^\vee,D(\pi_2))-\dim\Ext^1_{H(F)}(D(\pi_1)^\vee,D(\pi_2))
\leq 1$$
with the equality holds if and only if $D(\pi_1^\vee)$ is generic.  When $D(\pi_1^\vee)$ is generic, it is well-known  
 $$\dim\Hom_{H(F)}(D(\pi_1)^\vee,D(\pi_2))=\dim\Hom_{H(F)}(D(\pi^\vee),\BC)=1.$$
Assume $d^\prime(\pi_1)=1$. Then $\Ext^1_{H(F)}(D(\pi_1)^\vee,D(\pi_2))\neq0$ and hence $D(\pi_1^\vee)$ is non-generic. 
Note that $d(\pi_1)=d^\prime(\pi_1)+1=2$, so $\pi_1^\vee\subset I_{P(F)}^{\GL_{n+1}(F)}\sigma$ for some segment $\sigma$ and block 
$(\frac{n+1}{2},\frac{n+1}{2})$-parabolic $P$ by Theorem \ref{BZ} and Proposition \ref{AuZe2}. On the other hand, by Theorem \ref{SSNP}
  $$\dim\Ext^1_{H(F)}(D(\pi_1^\vee), D(\pi_2))=\dim\Hom_{H(F)}(\pi_2, D(\pi_1^\vee))$$
  Thus  $\pi_2$ must  appear in un-normalized Jacquet module $D(\pi^\vee)_{N(F)}$  with respect to the $(n,1)$-parabolic subgroup of $\GL_{n+1}$ by \cite[Proposition 5.3]{Pra18}. This forces $n=1$ and   $\pi_1=\St\otimes\eta^{-1}$ and $\pi_2=\eta$ for some character $\eta:\ F^\times\to \BC^\times$.  

When $\pi$ is supercuspidal,  $\pi$ is RSC by \cite[lemma 3.1]{Zha0}. 
When $\pi_1=\St\otimes\eta^{-1}$ and $\pi_2=\eta$, one has  $d^\prime(\pi)=1$ and $\Ext^{d^\prime(\pi)}_{H(F)}(D(\pi_1^\vee), D(\pi_2))=\BC$. Hence $\pi$ is RSC and we are done.
 \end{proof}

	\end{document}